\numberwithin{equation}{section}
\theoremstyle{plain}
\newtheorem{theorem}{Theorem}[section]
\newtheorem{lemma}[theorem]{Lemma}
\newtheorem{proposition}[theorem]{Proposition}
\newtheorem{corollary}[theorem]{Corollary}
\newtheorem{claim}[theorem]{Claim}
\theoremstyle{definition}
\newtheorem{definition}[theorem]{Definition}
\newtheorem{question}[theorem]{Question}
\newtheorem{example}[theorem]{Example}
\newenvironment{claimproof}[1][Proof of Claim]{\begin{proof}[#1]}{\end{proof}}
\newcommand\<{\langle}
\renewcommand\>{\rangle}
\newcommand\Wedge{\bigwedge}
\newcommand\suchthat{\mid}
\newcommand{\R}{\mathbb{R}}
\newcommand{\N}{\mathbb{N}}
\newcommand\includedin{\subseteq}
\newcommand\bbzero{\mathbb 0}
\newcommand\bbone{\mathbb 1}  
\DeclareMathOperator\EQ{\mathcal {EQ}}
\DeclareMathOperator\UM{\MM}
\newcommand\MM{{\mathcal M}}
\newcommand\fraisse{Fra\"\i ss\'e }
\newcommand\intersect{\cap}
\newcommand\includes{\supseteq}
\renewcommand\AA{{\mathcal A}}
\begin{document}


\title[$\Lambda$-ultrametric spaces and
lattices of equivalence relations]{$\Lambda$-ultrametric spaces and
lattices of equivalence relations}

\author[Samuel Braunfeld]{Samuel Braunfeld}
\address{Department of Mathematics\\
University of Maryland, College Park\\Maryland 20742\\USA}
\urladdr{math.umd.edu/~sbraunf}
\email{sbraunf@math.umd.edu}

\subjclass{03C50, 06B15, 18B35}

\keywords{Generalized ultrametric spaces, Lattice of equivalence relations, \fraisse theory}

\begin{abstract}
For a finite lattice $\Lambda$, $\Lambda$-ultrametric spaces have, among other reasons, appeared as a means of constructing structures with lattices of equivalence relations embedding $\Lambda$. This makes use of an isomorphism of categories between $\Lambda$-ultrametric spaces and structures equipped with certain families of equivalence relations. We extend this isomorphism to the case of infinite lattices. We also pose questions about representing a given finite lattice as the lattice of $\emptyset$-definable equivalence relations of structures with model-theoretic symmetry properties.
\end{abstract}

\maketitle

\section{Introduction} 

Our goal is to present an isomorphism between two categories, which arose in simplified form in the course of constructing certain homogeneous (Definition \ref{def:homog}) structures in \cite{Lat}.

The first category has objects consisting of a set equipped with a family of equivalence relations forming a lattice $\Lambda$, or substructures thereof. However, we initially define its objects as structures consisting of 
of a set equipped with a family of equivalence relations,
closed under taking meets in the lattice of all equivalence
relations on the set, and labeled by the elements
of a fixed lattice $\Lambda$ in such a way that
the map from $\Lambda$ to the lattice of equivalence relations is a homomorphism.

The objects of the second category are lattice-valued
metric spaces (also known as generalized ultrametric spaces) with metric valued in a certain
complete lattice $\Phi(\Lambda)$ containing $\Lambda$.

 In \cite{Lat}, the structures of interest were the families of equivalence relations, but the metric spaces were technically easier to work with. As $\Lambda$ was assumed to be finite, almost everything trivializes; in particular $\Phi(\Lambda) = \Lambda$. The correspondence is then roughly as follows. Starting with a structure equipped with a suitable family of equivalence relations, one defines $d(x, y)$ to be the finest equivalence relation that holds between $x$ and $y$. From this distance, one can easily recover all the equivalence relation information.

The general correspondence is largely a matter of taking care in
setting up the correct categories and in the choice of the map $\Phi$---we want the lattice of filters on $\Lambda$, rather than the usual Dedekind-MacNeille completion.

Generalized ultrametric spaces were first investigated by Priesse-Crampe and Ribenboim \cite{GUM}, who seem primarily interested in applications to valued fields. In \cite{PCR} and \cite{GUM}, the correspondence with certain structures of equivalence relations is developed to some extent, but not to the point where passing to $\Phi(\Lambda)$ becomes necessary. The correspondence also appears in restricted contexts in \cite[\S 6.2]{Berg} and \cite{PT}, in order to produce embeddings of a given lattice into the lattice of all equivalence relations on a given set. 

The paper is divided into three further sections.  
First, we present some preliminary material on the filter lattice $\Phi(\Lambda)$, along with an example motivating the passage to it. As this lattice has appeared several times before, these results are likely standard. The next section presents the isomorphism of categories that is our main result. Finally, we close by mentioning the motivating model-theoretic context and some related open questions.

\section{The filter lattice}

\begin{definition}
For $\Lambda$ a lattice, let $\Phi(\Lambda)$
denote the \emph{filter lattice}, defined as follows.

\begin{enumerate}
\item Elements are filters in $\Lambda$
(i.e., nonempty subsets, closed under meet and closed upwards).
\item The partial order on $\Phi(\Lambda)$
is defined as reverse inclusion.
\end{enumerate}

For any $L \subset \Lambda$, we write
$\<L\>$ for the filter generated by $L$,
which is the upward closure of the set of elements
of the form
$\Wedge L_0$
for $L_0 \subset L$ finite.

For $\lambda\in \Lambda$, let $\widehat \lambda$
denote the principal filter $\<\lambda\>=\set{x\in \Lambda\suchthat x\ge \lambda}$.
\end{definition}

The following example shows how the correspondence may fail without the passage to $\Phi(\Lambda)$.

\begin{example}
Let $\Lambda$ be the interval $[0,1]$ in $\R$ with
the usual ordering. Suppose we have a structure equipped with a suitable family of equivalence relations labeled by elements of $\Lambda$. If  $xE_\lambda y$ for $\lambda \in (0,1]$, then there is no finest equivalence relation that holds between $x$ and $y$, so we cannot assign a distance as in the case where $\Lambda$ is finite.

For the given $\Lambda$, $\Phi(\Lambda)$ is isomorphic
to 
\[
([0,1]\times \{0,1\})\setminus \{(1,1)\}
\]
with the
lexicographic ordering, with $\widehat\lambda$ corresponding to $(\lambda,0)$
and with $\ \widehat\lambda\setminus \{\lambda\}$ corresponding to $(\lambda,1)$.
As we do not allow empty filters, we leave off $(1,1)$.

For the $x, y$ given in the first paragraph, we can now assign $d(x, y) = (0,1) \in \Phi(\Lambda)$.
\end{example}

\begin{lemma}
Let $\Lambda$ be a bounded lattice. Then the following hold.

\begin{enumerate}
\item $\Phi(\Lambda)$ is a complete lattice with
join given by intersection, and meet by taking the
filter generated by the union of a given set of filters.

\item The map
$\phi \colon \Lambda\to \Phi(\Lambda)$ defined by
\[
\lambda\mapsto \widehat \lambda
\]
is an isomorphic embedding with respect
to the lattice operations.

\item If $\lambda\in\Lambda$, $L\includedin \Lambda$,
and $\widehat \lambda=\Wedge \set{\widehat \ell | \ell \in L}$ in $\Phi(\Lambda)$,
then $\lambda$ is the meet of some finite subset of $L$, in $\Lambda$.
\end{enumerate}
\end{lemma}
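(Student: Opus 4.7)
My plan is to address the three parts in order, with the bulk of the work being routine verification; part (3) carries the only real content.

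For part (1), I would first check that the intersection $\bigcap_i F_i$ of a family of filters is itself a filter: it is closed under meet and upward-closed because each $F_i$ is, and it is nonempty because the top element $\mathbb{1}$ of $\Lambda$ lies in every filter (this is where boundedness is used). Unpacking the reverse-inclusion order, $F$ is an upper bound of $\{F_i\}$ precisely when $F \subseteq F_i$ for every $i$, i.e.\ $F \subseteq \bigcap_i F_i$, so the intersection is the join. Dually, $F$ is a lower bound precisely when $F \supseteq \bigcup_i F_i$, and the least such filter is $\langle \bigcup_i F_i \rangle$, giving the meet.

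For part (2), $\lambda \le \mu$ is equivalent to $\widehat{\mu} \subseteq \widehat{\lambda}$, which under reverse inclusion is $\widehat{\lambda} \le \widehat{\mu}$, so $\phi$ is order-preserving and order-reflecting; in particular it is injective. To see it preserves joins, $\widehat{\lambda} \vee \widehat{\mu} = \widehat{\lambda} \cap \widehat{\mu} = \{x \mid x \ge \lambda,\ x \ge \mu\} = \widehat{\lambda \vee \mu}$. For meets, $\widehat{\lambda} \wedge \widehat{\mu} = \langle \widehat{\lambda} \cup \widehat{\mu} \rangle$ contains $\lambda \wedge \mu$ (by closure under binary meet), hence contains $\widehat{\lambda \wedge \mu}$; conversely every element of $\widehat{\lambda} \cup \widehat{\mu}$ lies above $\lambda \wedge \mu$, so generates a filter inside $\widehat{\lambda \wedge \mu}$.

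For part (3), the key observation is that the generating set of $\bigwedge_{\ell \in L} \widehat{\ell}$ can be replaced by $L$ itself. Indeed, each $\widehat{\ell}$ is already contained in $\langle L \rangle$ since $\ell \in L \subseteq \langle L \rangle$ and $\langle L \rangle$ is upward-closed, while conversely $L \subseteq \bigcup_{\ell \in L} \widehat{\ell}$; hence
\[
\widehat{\lambda} \;=\; \bigwedge_{\ell\in L}\widehat{\ell} \;=\; \Bigl\langle \bigcup_{\ell \in L}\widehat{\ell}\Bigr\rangle \;=\; \langle L \rangle.
\]
By the definition of the generated filter, the membership $\lambda \in \langle L \rangle$ gives a finite $L_0 \subseteq L$ with $\lambda \ge \bigwedge L_0$. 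On the other hand $\bigwedge L_0 \in \langle L \rangle = \widehat{\lambda}$ forces $\bigwedge L_0 \ge \lambda$, so $\lambda = \bigwedge L_0$, as required.

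The only step requiring care is (3), and the main obstacle there is conceptual rather than computational: one must recognize that the meet in $\Phi(\Lambda)$ of the principal filters $\{\widehat{\ell}\}$ is the same as the filter generated by $L$, after which the finiteness assertion falls out of the very definition of generation by finite meets.
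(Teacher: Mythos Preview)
Your argument is correct and follows essentially the same route as the paper's proof: in each part you make the same identifications (intersection as join, generated filter as meet, the two equalities $\widehat{\lambda_1 \vee \lambda_2} = \widehat{\lambda_1} \cap \widehat{\lambda_2}$ and $\widehat{\lambda_1 \wedge \lambda_2} = \langle \widehat{\lambda_1} \cup \widehat{\lambda_2}\rangle$, and in (3) the reduction to $\lambda \in \langle L\rangle$ and hence to a finite $L_0$). The only difference is that you spell out the reverse inequality $\bigwedge L_0 \ge \lambda$ in (3) explicitly, whereas the paper leaves it implicit in ``The claim follows.''
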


\begin{proof}
(1) The order is reverse inclusion, so the rules for
join and meet are clear. For completeness, the only point to check is that the intersection of arbitrarily many filters is always non-empty. As $\Lambda$ has a maximum element, this holds.

(2) It suffices to check that the map $\phi$ 
preserves
meet and join, or in other words
\begin{align*}
\widehat \lambda_1\vee \widehat \lambda_2&=\widehat{\lambda_1\vee \lambda_2}\\
\widehat \lambda_1\wedge \widehat \lambda_2&=\widehat{\lambda_1\wedge \lambda_2}
\end{align*}

More explicitly, this reads as follows.
\begin{align*}
x\ge \lambda_1,\lambda_2&\iff x\ge \lambda_1\vee \lambda_2\\
x\in \< \widehat \lambda_1,\widehat \lambda_2\>&
\iff x\ge \lambda_1\wedge \lambda_2
\end{align*}

Both points are clear.

(3) If $\widehat \lambda=\Wedge \set{\widehat \ell | \ell \in L}$ then $\lambda\in \< L\>$ and therefore $\lambda\in \< L_0\>$ for some finite set $L_0$. The claim follows.
\end{proof}

The third point in the lemma above seems to capture the key abstract property of the construction. However, it is not directly used in what follows, as we prefer to work with the explicit definition of $\Phi(\Lambda)$.

\section{The correspondence}

\begin{definition}
Suppose $\Lambda$ is a bounded lattice with minimal element $\bbzero$ and maximal element $\bbone$.
\begin{enumerate}
\item
Let $\EQ_\Lambda$ denote the 
category whose objects are the models in the language
\[
\set{E_\lambda\suchthat \lambda \in \Lambda}
\]
for which the following hold.
\begin{itemize}
\item All $E_\lambda$ are equivalence relations.
\item The map from $\lambda\to E_\lambda$
preserves meets; in particular, the set of relations
$\set{E_\lambda\suchthat \lambda\in \Lambda}$
is closed under intersection.
\item $E_{\bbzero}$ is equality, and $E_{\bbone}$ is the trivial relation.
\end{itemize}
We take embeddings as morphisms.

\item
A {\it $\Lambda$-ultrametric space} 
is a structure of the form
$(X,d)$ for which 
\begin{itemize}
\item $d \colon X^2\to \Lambda$ is a symmetric function.
\item $d(x,y)=0$ iff $x=y$.
\item 
$d(x,y)\le d(x,z)\vee d(y,z)$ for all $x,y,z$
(Triangle Inequality).
\end{itemize}

\item Let $\UM_{\Lambda}$ be the category of
$\Lambda$-ultrametric spaces,
with isometric embeddings as morphisms.
\end{enumerate}
\end{definition}

In the model-theoretic context, we are primarily concerned with $\emptyset$-definable equivalence relations. While these form a lattice in an $\omega$-categorical structure (Definition \ref{def:omcateg}), in general they only need form a lower semi-lattice. It would thus seem natural to define the category $\EQ_\Lambda$ 
for any lower semi-lattice $\Lambda$ 
and to define the category $\UM_\Lambda$
for any upper semi-lattice $\Lambda$. 
But it is not clear what could replace the correspondence that we aim at below.

\begin{example}
If $\Lambda$ is a chain, then $\Lambda$-ultrametric spaces are ultrametric spaces in the usual sense.
\end{example}

\begin{example}
Let $\Lambda$ be a lattice and
let $A$ be a set with at least two elements.
Define $E_\lambda$ to be equality for $\lambda\in \Lambda\setminus \{\bbone\}$, and let
$E_\bbone$ be trivial. Then $(A,\set{E_\lambda\suchthat \lambda\in \Lambda})$ belongs
to $\EQ_\Lambda$. The map sending
$\lambda$ to $E_\lambda$ is a semi-lattice
homomorphism, but will not be a lattice
homomorphism unless $\bbone$ is 
join-irreducible. 

This example arises naturally as a substructure
of any structure in $\EQ_\lambda$ having
a pair of elements that are not related by
any of the relations $E_\lambda$ for $\lambda<\bbone$. For example, there is a natural representation of the Boolean algebra with $n$
atoms by equivalence relations on the set $\{0,1\}^n$, in which the co-atoms correspond to the relations 
$E_i(x,y)\iff x_i=y_i$. Intersections of co-atoms in the Boolean algebra correspond to intersections of these relations, and so specify agreement in multiple coordinates. We may then consider
the substructure induced on the constant sequences, in which two elements will be related by a relation other than $E_\bbone$ only if they agree in some coordinate and thus must be equal, i.e. $E_\bbzero$-related.
\end{example}

Now the main point is to consider the
relationship between the categories
$\EQ_\Lambda$ and $\UM_{\Phi(\Lambda)}$.
The former is the category that interests us, while
the latter is easier to work with in the context
of the \fraisse theory of amalgamation classes.

\begin{theorem}\label{Prop:Equivalence}
Let $\Lambda$ be a bounded lattice.
Then the categories 
$\EQ_\Lambda$ 
and
$\UM_{\Phi(\Lambda)}$ 
are canonically isomorphic.
\end{theorem}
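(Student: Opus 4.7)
The plan is to construct functors $F \colon \EQ_\Lambda \to \UM_{\Phi(\Lambda)}$ and $G \colon \UM_{\Phi(\Lambda)} \to \EQ_\Lambda$ by explicit pointwise formulas and then verify that they are mutually inverse. Specifically, $F$ will send $(X, \{E_\lambda\}_{\lambda \in \Lambda})$ to $(X, d)$ with $d(x,y) := \{\lambda \in \Lambda : x E_\lambda y\}$, and $G$ will send $(X,d)$ to $(X, \{E_\lambda\}_{\lambda \in \Lambda})$ with $x E_\lambda y$ iff $\lambda \in d(x,y)$. These formulas are manifestly inverse on the underlying data, so the work consists entirely in matching up the axioms on the two sides.

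For the object-level verification of $F$, I first need to check that $d(x,y) \in \Phi(\Lambda)$, i.e.\ that it is a filter. Upward closure will follow from meet-preservation of $\lambda \mapsto E_\lambda$, since $\lambda \le \mu$ gives $E_\lambda = E_{\lambda \wedge \mu} = E_\lambda \cap E_\mu \subseteq E_\mu$; closure under meet is immediate from meet-preservation; and non-emptiness holds because $E_\bbone$ is trivial, so $\bbone \in d(x,y)$. Symmetry of $d$ comes from that of each $E_\lambda$. For the axiom ``$d(x,y) = 0$ iff $x = y$'', note that the bottom of $\Phi(\Lambda)$ under reverse inclusion is the largest filter $\widehat\bbzero = \Lambda$, and by upward closure $d(x,y) = \Lambda$ iff $\bbzero \in d(x,y)$ iff $x E_\bbzero y$ iff $x = y$. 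The triangle inequality $d(x,y) \le d(x,z) \vee d(y,z)$ translates, since join in $\Phi(\Lambda)$ is intersection of filters, to $d(x,y) \supseteq d(x,z) \cap d(y,z)$, which is just transitivity of each $E_\lambda$.

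For $G$ the corresponding checks run in the reverse direction and are completely symmetric: reflexivity and $E_\bbzero$ being equality come from $d(x,x)$ being the bottom $\Lambda$ of $\Phi(\Lambda)$; symmetry of each $E_\lambda$ from symmetry of $d$; transitivity from the triangle inequality combined with closure of $d(x,y)$ under meet; $E_\bbone$ being trivial from every filter containing $\bbone$; and meet-preservation of $\lambda \mapsto E_\lambda$ from the fact that filters are closed under meet. Functoriality is then automatic: an embedding in $\EQ_\Lambda$ preserves each $E_\lambda$ in both directions and hence preserves $d$ pointwise, while an isometric embedding in $\UM_{\Phi(\Lambda)}$ preserves $d$ and hence preserves each $E_\lambda$. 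That $F$ and $G$ are mutually inverse is then evident from the formulas.

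There is no real obstacle here; every step is routine once the two constructions have been written down. The only thing demanding care is the order convention on $\Phi(\Lambda)$: because the order is reverse inclusion, the bottom of $\Phi(\Lambda)$ is $\widehat\bbzero = \Lambda$, the top is $\widehat\bbone = \{\bbone\}$, joins are intersections of filters, and meets are filters generated by unions. As long as this is kept straight, each axiom defining $\EQ_\Lambda$ translates directly into the corresponding axiom defining $\UM_{\Phi(\Lambda)}$, and inspection gives the asserted canonical isomorphism of categories.
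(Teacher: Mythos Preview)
Your proposal is correct and follows essentially the same approach as the paper: the paper defines the same pair of mutually inverse functors (there called $m$ and $e$), with $d(x,y)=\set{\lambda\suchthat E_\lambda(x,y)}$ and $E_\lambda(x,y)\iff\lambda\in d(x,y)$, and verifies the same list of axioms via the same translations (filter property from meet-preservation, triangle inequality as $d(x,y)\supseteq d(x,z)\cap d(y,z)$, etc.). One small remark: for transitivity of $E_\lambda$ in the $G$-direction you only need the triangle inequality (join in $\Phi(\Lambda)$ is intersection), not closure of $d(x,y)$ under meet.
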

\begin{proof}
Given 
$\AA
=(A,\set{E_\lambda\suchthat \lambda\in \Lambda})$ in $\EQ_\Lambda$,
let  
$m(\AA)$ be
$(A,d)$, where 
\[
d \colon A\times A\to \Phi(\Lambda)
\]
is defined by 
\[
d(x,y)=\bigwedge \set{\widehat \lambda
\suchthat \mbox{$\lambda\in \Lambda$ and $E_\lambda(x,y)$ in $\AA$}}
\]

In the reverse direction, 
for $\MM=(M,d)$ a $\Phi(\Lambda)$-ultrametric space,
let $e(\MM)=(M,\set{E_\lambda\suchthat \lambda\in \Lambda})$ where $E_\lambda$ is defined by
\[
E_{\lambda}(x,y)\iff \lambda \in d(x,y)
\]

We will show that $m$ and $e$ 
give a pair of mutually
inverse bijections between the objects, and
that these bijections
give rise to an isomorphism between
the categories.

There are a number of points to be checked. Let
us begin by listing them all.

\begin{enumerate}
\item $m \colon \EQ_\Lambda\to \UM_{\Phi(\Lambda)}$
\item $e\colon \UM_{\Phi(\Lambda)}\to \EQ_\Lambda$
\item At the level of objects, the maps $m,e$ 
are mutually inverse.
\item If $f\colon A\to B$ is a morphism in one category,
then $f\colon A'\to B'$ is also a morphism between
the corresponding objects in the other category.
\end{enumerate}

\begin{claim}
If $\AA\in \EQ_\Lambda$, then
$m(\AA)\in \UM_{\Phi(\Lambda)}$.
\end{claim}

\begin{claimproof}
As $\Phi(\Lambda)$ is complete, the definition
of $d(x,y)$ makes sense, and
\[
d\colon A\times A\to \Phi(\Lambda)
\]

We first make the definition of $d$ more explicit.
Note that for $x,y\in A$ the set $\set{\lambda\in \Lambda\suchthat E_\lambda(x,y)}$ is a filter,
since the map from $\lambda$ to $E_\lambda$
preserves meets.
We have
\[
d(x,y)=
\Wedge \set{\widehat \lambda\suchthat E_\lambda(x,y)}
=\set{\lambda\suchthat E_\lambda(x,y)}
\]

Symmetry of $d$ is clear.

For the triangle inequality, take $x,y,z\in A$ 
and note that
\begin{align*}
d(x,z)\vee d(y,z)&=
d(x,z)\intersect d(y,z)
=\set{\lambda\suchthat E_\lambda(x,z), 
E_\lambda(y,z)} 
\\
&\includedin \set{\lambda\suchthat E_\lambda(x,y)}
=d(x,y)
\end{align*}

Our last point is that $d(x,y)=\bbzero$ iff $x=y$.

For $x\in A$ we have $E_\lambda(x,x)=\Lambda=\widehat \bbzero=\bbzero$.

For the converse, 
if $d(x,y)=\bbzero$ in $\Phi(\Lambda)$ this
means
\begin{align*}
\set{\lambda\suchthat E_\lambda(x,y)}&=\Lambda
\end{align*}
and thus $E_{\bbzero}(x,y)$ holds. As we assume $E_{\bbzero}$ is equality, we find $x=y$.
\end{claimproof}

\begin{claim}
If $\MM\in \UM_{\Phi(\Lambda)}$ then
$e(\MM)\in \EQ_\Lambda$.
\end{claim}

\begin{claimproof}
We check first that the 
relations $E_\lambda$ are equivalence relations.

\begin{itemize}
\item Reflexivity: In $\Phi(\Lambda)$, 
$\bbzero=\Lambda$.
\item Symmetry: $d$ is symmetric.
\item Transitivity:  The triangle inequality 
may be written more explicitly in the following form.
\begin{align*}
d(x,y)&\includes d(x,z)\intersect d(y,z)
\end{align*}
It is then clear that each relation $E_\lambda$
is transitive.
\end{itemize}

The second point to be checked is preservation of meets: 
\begin{align*}
E_{\lambda\wedge \lambda'}&=E_\lambda \intersect E_{\lambda'}
\end{align*}

Now
\begin{align*}
E_{\lambda\wedge \lambda'}(x,y)&\iff \lambda\wedge \lambda'\in d(x,y)\\
&\iff \lambda,\lambda'\in d(x,y)
\end{align*}
since $d(x,y)$ is a filter in $\Lambda$.

The last point is that $E_\bbzero$ and $E_\bbone$ 
are as intended.

It is easy to see that $E_\bbzero(x,y)$ holds
iff $d(x,y)=\bbzero$ (in $\Phi(\Lambda)$), and
this is equivalent to $x=y$.

And $E_\bbone(x,y)$ holds iff $\bbone \in d(x,y)$
which is always the case.
\end{claimproof}

\begin{claim}
For $\AA\in \EQ_\Lambda$ and $\MM\in \UM_{\Phi(\Lambda)}$, we have
\begin{align*}
e(m(\AA))&=\AA\\
m(e(\MM))&=\MM
\end{align*}
\end{claim}

\begin{claimproof}
We know
\begin{align*}
\lambda \in d(x,y)&\iff E_\lambda(x,y) \mbox{ when $\MM=m(\AA)$ and $x,y\in A$}\\
E_\lambda(x,y)&\iff \lambda\in d(x,y) \mbox{ when $\AA=e(\MM)$ and $x,y,\in M$}
\end{align*}

Two applications of these rules will clearly bring us back where we started.

\begin{claim}
If $\MM_i=e(\AA_i)$ for $i=1,2$, then
a map $f\colon A_1\to A_2$ will be an embedding
if and only if it is a $\Phi(\Lambda)$-isometry.
\end{claim}

Let us compare the two properties.

\begin{align*}
E_\lambda(x,y)&\iff E_\lambda(f(x),f(y)) 
\mbox{ for $x,y\in A_1$ and $\lambda\in \Lambda$}\\
d(x,y)&=d(f(x),f(y))
\end{align*}

Recalling that $\lambda\in d(x,y)$ iff 
$E_\lambda(x,y)$ holds, and the same for
$f(x),f(y)$, the claim follows.
\end{claimproof}
\end{proof}

\section{Model-theoretic context}
In this section, we outline the model-theoretic context, namely homogeneous structures in the sense of \fraisse theory, in which this correspondence arose, and mention some related open problems.

\begin{definition} \label{def:homog}
A countable structure $M$ is \emph{homogeneous} if every partial isomorphism between finitely-generated substructures extends to an automorphism of $M$.
\end{definition}

\begin{definition} \label{def:omcateg}
A countable structure $M$ is $\omega$-\emph{categorical} if it is the unique countable model of its first-order theory, or equivalently if Aut$(M)$ has only finitely many orbits in its diagonal action on $M^n$ for each $n \in \N$.
\end{definition}

Under the assumption of a finite relational language, homogeneity implies $\omega$-categoricity. Lying at the intersection of fields such as model theory, permutation groups, and combinatorics, these properties interact with a wide range of subjects. For example, in constraint satisfaction problems, they are crucial in allowing the universal algebraic approach from finite domains to be generalized to infinite domains \cite{Bod}.

The correspondence was used in the course of constructing homogeneous structures from $\EQ_\Lambda$, which would serve as the lattice of $\emptyset$-definable equivalence relations for the homogeneous structures of interest. This proceeds via combinatorial analysis of the finite substructures, and as the passage to substructures is simpler in $\UM_{\Phi(\Lambda)}$, it was preferable to work in that category.

The next corollaries could be proven directly, but are immediate with the correspondence in hand.

\begin{corollary}
The categories $\EQ_\Lambda$ and
$\UM_{\Lambda}$ are closed under passage
to substructures.
\end{corollary}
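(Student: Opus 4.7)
My plan is to dispatch each category in turn, with $\UM_\Lambda$ being essentially immediate and $\EQ_\Lambda$ reduced to it via the correspondence of Theorem \ref{Prop:Equivalence}.

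For $\UM_\Lambda$: given $(X,d) \in \UM_\Lambda$ and any subset $Y \subseteq X$, I would argue that $(Y, d|_{Y \times Y})$ is again a $\Lambda$-ultrametric space. The three defining conditions --- symmetry, the biconditional $d(x,y) = \bbzero \iff x = y$, and the triangle inequality --- are universal statements quantifying over pairs or triples of points, so they transfer automatically from $X$ to any subset $Y$. The inclusion map $Y \hookrightarrow X$ is then tautologically an isometric embedding, so this exhibits $Y$ as a subobject in the categorical sense. Note that no finiteness or completeness of $\Lambda$ is used; we only need $\Lambda$ bounded so that $\bbzero$ makes sense.

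For $\EQ_\Lambda$: the cleanest route is to invoke Theorem \ref{Prop:Equivalence} to transport closure from the ultrametric side. Since $\Phi(\Lambda)$ is itself a (complete, hence) bounded lattice, the argument of the previous paragraph applied to $\Phi(\Lambda)$ shows that $\UM_{\Phi(\Lambda)}$ is closed under substructures. Given $\AA \in \EQ_\Lambda$ and a subset $B$ of its universe, I would form $m(\AA) \in \UM_{\Phi(\Lambda)}$, restrict to $B$ to obtain $\MM' \in \UM_{\Phi(\Lambda)}$, and then apply $e$ to obtain $e(\MM') \in \EQ_\Lambda$, with $B$ as underlying set. Its relations coincide with the pointwise restrictions $E_\lambda|_{B \times B}$ of those of $\AA$, because by the definitions of $m$ and $e$, whether $\lambda \in d(x,y)$ is determined entirely by whether $E_\lambda(x,y)$ holds in $\AA$, which is unchanged upon restriction.

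There is no serious obstacle. The one thing worth being explicit about is that the substructure produced by the roundtrip $e \circ (\text{restrict}) \circ m$ agrees with the naive notion of substructure obtained by restricting each relation $E_\lambda$ directly, so one obtains the expected structure rather than something spurious; this is immediate from the explicit formulas for $m$ and $e$. If desired, one can instead give a direct proof for $\EQ_\Lambda$ by checking that restriction preserves the defining properties (each $E_\lambda|_B$ is an equivalence relation, the assignment $\lambda \mapsto E_\lambda|_B$ still preserves meets because intersection commutes with restriction, and $E_\bbzero|_B$ and $E_\bbone|_B$ are as required), but the virtue of the present approach is that it requires no verification beyond that already done for $\UM_{\Phi(\Lambda)}$.
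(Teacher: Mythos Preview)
Your proposal is correct and follows essentially the same line as the paper: handle $\UM_\Lambda$ directly (the axioms are universal), then for $\EQ_\Lambda$ transport the result through the isomorphism of Theorem~\ref{Prop:Equivalence}, using that restricting $m(\AA)$ to a subset and applying $e$ recovers the induced substructure. The paper phrases this last step as invoking Claim~4 of Theorem~\ref{Prop:Equivalence} for inclusion maps, which is exactly your verification that the roundtrip $e\circ(\text{restrict})\circ m$ agrees with naive restriction of the relations.
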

\begin{proof}
This is clear in the case of $\UM_\Lambda$,
and if we apply it to $\UM_{\Phi(\Lambda)}$
and use Claim 4 of Theorem \ref{Prop:Equivalence},
in the case of inclusion maps, the claim follows.
\end{proof}

\begin{corollary}
Let $\AA\in \EQ_\Lambda$ and $\MM=m(\AA)$.
Then $\AA$ is a homogeneous structure iff
$\MM$ is a homogeneous $\Phi(\Lambda)$-ultrametric
space.
\end{corollary}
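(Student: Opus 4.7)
The plan is to exploit the category isomorphism of Theorem \ref{Prop:Equivalence} to translate each ingredient in the definition of homogeneity between the two sides. Homogeneity of a countable structure asserts that every partial isomorphism between finitely generated substructures extends to an automorphism, so I need to check that three notions correspond under $m$ (equivalently, $e$): finitely generated substructures, isomorphisms between them, and automorphisms of the ambient structure.

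First I would observe that both $\EQ_\Lambda$ and $\UM_{\Phi(\Lambda)}$ have purely relational languages, so the finitely generated substructure on a set $S$ is just the induced structure on $S$. By the Corollary immediately preceding this one (applied to inclusion maps), the operation $m$ commutes with passing to substructures: if $\AA_0 \includedin \AA$ is induced on $S \includedin A$, then $m(\AA_0)$ is exactly the substructure of $m(\AA)$ induced on $S$, and the analogous statement holds for $e$. Hence finite substructures of $\AA$ correspond bijectively with finite substructures of $\MM = m(\AA)$ via the same underlying sets.

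Next I would invoke Claim 4 of Theorem \ref{Prop:Equivalence}, which says that a map $f \colon A_1 \to A_2$ is an embedding in $\EQ_\Lambda$ iff it is a $\Phi(\Lambda)$-isometric embedding. Applying this to inclusions between the finite substructures identified above, partial isomorphisms between finitely generated substructures of $\AA$ correspond exactly to partial isometries between the corresponding finite subspaces of $\MM$. Similarly, taking $A_1 = A_2 = A$ and $f$ bijective, automorphisms of $\AA$ correspond exactly to automorphisms of $\MM$ (since the inverse of a bijective embedding in either category is again an embedding in that category).

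Putting the three correspondences together, a partial isomorphism in $\AA$ extends to an automorphism of $\AA$ iff the corresponding partial isometry in $\MM$ extends to an automorphism of $\MM$, giving the equivalence of the two homogeneity statements. The main obstacle, such as it is, is simply being careful that ``finitely generated substructure'' reduces to ``substructure on a finite set'' in both relational settings and that the correspondence respects this operation; both points follow at once from the preceding Corollary together with Claim 4 of Theorem \ref{Prop:Equivalence}, so the proof is essentially a formal translation with no new content.
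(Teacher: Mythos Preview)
Your proposal is correct and follows essentially the same approach as the paper: use Theorem~\ref{Prop:Equivalence} (in particular Claim~4) and the preceding corollary to transfer finite substructures, partial isomorphisms, and automorphisms between $\AA$ and $\MM$, concluding that homogeneity on one side is equivalent to homogeneity on the other. The paper's version is slightly terser, carrying out one direction explicitly and remarking that the other is symmetric, but the content is the same.
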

\begin{proof}
Substructure and isomorphism 
correspond between these classes by Theorem \ref{Prop:Equivalence}.

Suppose $\AA$ is homogeneous. Let $X, Y \subset \MM$ be finite such that $X \cong Y$. Then $e(X), e(Y) \subset \AA$ are finite and isomorphic, and as $\AA$ is homogeneous, there is an automorphism $\sigma$ of $\AA$ sending $e(X)$ to $e(Y)$. Then $m \circ \sigma \circ e$ is an automorphism of $\MM$ sending $X$ to $Y$.

The proof of the other direction is essentially the same, swapping the roles of $e$ and $m$.
\end{proof}

We now turn to open questions, prompted by the following result of \cite{Lat}.

\begin{proposition}
Let $\Lambda$ be a finite distributive lattice. Then there is a homogeneous $\Lambda$-ultrametric space with lattice of $\emptyset$-definable equivalence relations isomorphic to $\Lambda$.
\end{proposition}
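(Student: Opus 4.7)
Since $\Lambda$ is finite, $\Phi(\Lambda)=\Lambda$, and Theorem \ref{Prop:Equivalence} lets me work with $\UM_\Lambda$ in place of $\EQ_\Lambda$. The plan is to apply \fraisse theory to the class $\mathcal{K}$ of all finite $\Lambda$-ultrametric spaces and then verify that the resulting homogeneous limit $M$ has exactly $\Lambda$ as its lattice of $\emptyset$-definable equivalence relations. The hereditary property is immediate from the corollary above, and joint embedding is witnessed by placing two spaces as a disjoint union with all cross-distances equal to $\bbone$.

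The crux, and the main obstacle, is amalgamation. Given $A\includedin B,C$ in $\mathcal{K}$, I would form $D=B\sqcup_A C$ and define, for $b\in B\setminus A$ and $c\in C\setminus A$,
\[
d(b,c) \;=\; \bigwedge_{a\in A} \bigl(d(b,a)\vee d(c,a)\bigr).
\]
The delicate verification is the triangle inequality for mixed triples — say $b,b'\in B$ and $c\in C\setminus A$, where $d(b,b')\le d(b,c)\vee d(b',c)$ must be checked. Distributivity of $\Lambda$ enters crucially through the identity $\bigl(\bigwedge_a p_a\bigr)\vee\bigl(\bigwedge_{a'}q_{a'}\bigr) = \bigwedge_{a,a'}(p_a\vee q_{a'})$, which reduces the task to showing
\[
d(b,b')\;\le\; d(b,a)\vee d(c,a)\vee d(c,a')\vee d(b',a')
\]
for every $a,a'\in A$; this in turn follows by iterated triangle inequalities in $B$ routing $b$ to $b'$ through $a$ and $a'$, together with the triangle in $C$ routing $d(a,a')$ through $c$.

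The \fraisse limit $M\in \UM_\Lambda$ is then homogeneous and corresponds via Theorem \ref{Prop:Equivalence} to a homogeneous $\AA\in\EQ_\Lambda$. Every $E_\lambda$ is $\emptyset$-definable, and these are pairwise distinct since every distance in $\Lambda$ is realized in $M$. Conversely, by homogeneity the $2$-orbits of $\mathrm{Aut}(M)$ are indexed by $\Lambda$ via $d$, so any $\emptyset$-definable equivalence relation $R$ takes the form $R=\set{(x,y)\suchthat d(x,y)\in S}$ for some $S\includedin\Lambda$. Because every compatible $3$-element $\Lambda$-ultrametric space embeds in $M$, transitivity of $R$ forces $S$ to be closed under joins (apply $R$ to a triangle with sides $\mu_1,\mu_2,\mu_1\vee\mu_2$) and closed downward (apply $R$ to an isosceles triangle $d(x,z)=d(y,z)=\mu$, $d(x,y)=\mu'\le\mu$); hence $S$ is the principal down-set of $\lambda := \bigvee S$, giving $R=E_\lambda$.
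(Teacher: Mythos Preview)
The paper does not prove this proposition; it is stated as a result quoted from \cite{Lat}, so there is no in-paper argument to compare against. Your proof is, however, a faithful reconstruction of the argument one expects (and that \cite{Lat} carries out): build the \fraisse limit of all finite $\Lambda$-ultrametric spaces, use distributivity to verify amalgamation via the formula $d(b,c)=\bigwedge_{a\in A}\bigl(d(b,a)\vee d(c,a)\bigr)$, and then read off the $\emptyset$-definable equivalence relations from the $2$-types.

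A couple of small points worth tightening. First, you only verify one of the three mixed triangle-inequality cases; the other two (e.g.\ $d(b,c)\le d(b,b')\vee d(b',c)$ with $b,b'\in B$, $c\in C\setminus A$) are genuinely easier---distributivity reduces them to a single triangle inequality in $B$---but it would be cleaner to say so explicitly rather than leave them implicit. Second, the claim that ``every distance in $\Lambda$ is realized in $M$'' deserves one sentence of justification: for each $\lambda\neq\bbzero$ the two-point space with distance $\lambda$ lies in $\mathcal K$, hence embeds in the limit. With those remarks your argument is complete and matches the intended route.
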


\begin{question}
What finite lattices are realizable as the lattice of $\emptyset$-definable equivalence relations of some homogeneous (resp. $\omega$-categorical) structure?
\end{question}

\begin{question}
Classify the homogeneous $\Lambda$-ultrametric spaces, for finite $\Lambda$.
\end{question}

In \cite{Lat}, it was also shown that distributivity is necessary under additional hypotheses.

\begin{definition}
Let $\Lambda$ be the lattice of $\emptyset$-definable equivalence relations on $M$. Then $\Lambda$ has the \emph{infinite index property} if whenever $E < F$, each $F$-class splits into infinitely many $E$-classes.
\end{definition}

\begin{proposition}
Let $M$ be a homogeneous structure with finite lattice of $\emptyset$-definable equivalence relations $\Lambda$. Suppose $\Lambda$ has the infinite index property and the reduct of $M$ to the language of $\emptyset$-definable equivalence relations is homogeneous. Then $\Lambda$ is distributive.
\end{proposition}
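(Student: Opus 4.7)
I argue the contrapositive. Assume $\Lambda$ is non-distributive; I aim to contradict the homogeneity of the reduct. By the Birkhoff--Dedekind characterization, $\Lambda$ contains a sublattice isomorphic to $N_5$ or $M_3$. Via Theorem~\ref{Prop:Equivalence}, and using $\Phi(\Lambda) = \Lambda$ for finite $\Lambda$, the reduct of $M$ is a homogeneous $\Lambda$-ultrametric space; write $C_\lambda(z)$ for the $E_\lambda$-class of $z$, and $d(\cdot,\cdot)$ for the lattice-valued distance. Infinite index, together with the hypothesis that $\Lambda$ is the full lattice of $\emptyset$-definable equivalence relations, implies every $\lambda \in \Lambda$ is realized as a distance: given any $x\in M$ and $\lambda\in\Lambda$, one finds $y$ in the $\lambda$-class of $x$ outside every proper sub-class, using that the finitely many proper $\mu$-classes within $C_\lambda(x)$ each have infinite complement by infinite index.

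For the $N_5$ case, let $\{p,a,b,c,t\}\subseteq\Lambda$ be a sublattice with $a<c$, $a\vee b=c\vee b=t$, and $a\wedge b=c\wedge b=p$. I first exhibit the triangles $(t,c,b)$ and $(t,a,b)$ in $M$. Starting from any pair $(x,u)$ with $d(x,u)=c$ and any $y$ with $d(u,y)=b$, the triangle inequalities force $d(x,y)\in\{b,t\}$ in the sublattice; moreover, $d(x,y)=b$ would require $y\in C_b(x)\cap C_b(u)$, which forces $xE_bu$, i.e.\ $b\ge c$, contradicting the $N_5$-structure. Hence $d(x,y)=t$, once additional $\Lambda$-elements below $t$ are avoided using infinite index. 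The symmetric argument gives $(t,a,b)$.

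By homogeneity of the reduct, any two pairs at distance $t$ are $\mathrm{Aut}$-conjugate, so there exist $u\in C_c(x)\cap C_b(y)$ (with $d(x,u)=c$, hence $u\notin C_a(x)$) and $v\in C_a(x)\cap C_b(y)$ above the same base pair $(x,y)$. Since $C_a(x)\subseteq C_c(x)$, both $u$ and $v$ lie in $C_c(x)$, so $uE_cv$; and both lie in $C_b(y)$, so $uE_bv$. The meet axiom in $\EQ_\Lambda$ yields $uE_{c\wedge b}v = uE_pv$, and since $p\le a$ we deduce $uE_av$; combined with $v\in C_a(x)$ this forces $u\in C_a(x)$, contradicting $u\notin C_a(x)$.

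The $M_3$ case is more delicate: with pairwise-incomparable atoms $\alpha,\beta,\gamma$ meeting to $p$ and joining to $t$, there is no analogue of the relation $a<c$ to drive a direct contradiction via two comparable relations. A successful argument would need to exploit the failure $(\alpha\vee\beta)\wedge\gamma>(\alpha\wedge\gamma)\vee(\beta\wedge\gamma)$ via a larger configuration simultaneously invoking all three atoms. I expect this step to be the main technical obstacle, likely requiring a carefully chosen 4- or 5-point amalgamation diagram or an orbit-counting argument under the action of $\mathrm{Aut}(M)$ that separates the three atom-classes around a common base pair.
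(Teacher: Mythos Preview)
The paper does not actually prove this proposition: it is quoted from \cite{Lat}, so there is no in-paper argument to compare against. What can be assessed is whether your proposal stands on its own, and it does not.

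The decisive gap is the $M_3$ case. You explicitly leave it open, writing only that you ``expect this step to be the main technical obstacle'' and sketching what a successful argument might need. That is not a proof. The remark immediately following the proposition in the paper---that dropping the infinite index property allows $M_3$ to occur---shows that $M_3$ is precisely where the infinite index hypothesis does real work, so this case cannot be deferred or treated as a routine variant of the $N_5$ argument.

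Your $N_5$ argument also has a genuine gap. You reason as though $d(x,y)$ must land in the five-element sublattice $\{p,a,b,c,t\}$, but $d$ takes values in all of $\Lambda$. The triangle inequality gives only $d(x,y)\le t$ and $d(x,y)\vee b\ge c$; there may be many $\Lambda$-elements satisfying these constraints that lie outside your sublattice. The phrase ``once additional $\Lambda$-elements below $t$ are avoided using infinite index'' does not supply the missing argument: infinite index tells you each class splits infinitely, but you have not shown how to choose $y$ so that $d(x,y)$ avoids every unwanted element of $\Lambda$ simultaneously. Similarly, your claim that every $\lambda\in\Lambda$ is realized as a distance is asserted via ``each have infinite complement by infinite index,'' but finitely many classes with infinite complement can still cover the ambient class; you need a sharper argument here (for instance, first realizing distances for join-irreducibles and then building up, or invoking homogeneity to transport realized distances).
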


However, \cite[\S 5.2]{thesis} contains an example without the infinite index property whose lattice of $\emptyset$-definable equivalence relations is the non-distributive lattice $M_3$. This example may be viewed as taking the equivalence relations to be parallel classes of lines in the affine plane over $\mathbb{F}_2$.


\subsection*{Acknowledgment}

I thank Gregory Cherlin for helpful discussions. The final publication is available at springerlink.com. \\ \url{link.springer.com/article/10.1007/s00012-019-0606-4}


\end{document}